\newtheorem{theorem}{Theorem}[section]
\newtheorem{lemma}[theorem]{Lemma}
\newtheorem{definition}[theorem]{Definition}
\numberwithin{equation}{section}
\newcommand{\PD}{\partial}
\newcommand{\Ic}{\mathcal{I}}
\newcommand{\Jc}{\mathcal{J}}
\newcommand{\Sc}{\mathcal{S}}
\newcommand{\Rb}{\mathbb{R}}
\newcommand{\Sb}{\mathbb{S}}
\newcommand{\Beq}{\begin{equation}}
\newcommand{\Eeq}{\end{equation}}
\newcommand{\beq}{\begin{equation*}}
\newcommand{\eeq}{\end{equation*}}
\newcommand{\bal}{\begin{align}}
\newcommand{\eal}{\end{align}}
\renewcommand{\l}{\langle}
\renewcommand{\r}{\rangle}
\newcommand{\D}{\mathrm{d}}
\newcommand{\bp}{\begin{prob}}
	\newcommand{\ep}{\end{prob}}
\newcommand{\bpr}{\begin{proof}}
	\newcommand{\epr}{\end{proof}}
\newcommand*\Rn{\mathbb{R}^n}
\newcommand{\tn}{T\mathbb{S}^{n-1}}
\renewcommand{\d}{\mathrm{d}}
\title{The generalized Saint Venant operator and integral moment transforms}
\author[Rohit Kumar Mishra and Suman Kumar Sahoo]{Rohit Kumar Mishra$^\ast$ and Suman Kumar Sahoo$^\dagger$}
	\email{rohit.m@iitgn.ac.in, rohittifr2011@gmail.com, suman.k.sahoo@jyu.fi}
	\address{$^\ast$ Indian Institute of Technology Gandhinagar, Gujarat, India
		\newline\indent$^\dagger$ University of Jyv\"{a}skyl\"{a}, Finland}
\begin{document}
	\maketitle
\begin{abstract}
In this article, we work with a generalized Saint Venant operator introduced by Vladimir Sharafutdinov \cite{Sharafutdinov1994} to describe the kernel of the integral moment transforms over symmetric $m$-tensor fields in $n$-dimensional Euclidean space.  We also provide an equivalence between the injectivity question for the integral moment transforms and generalized Saint Venant operator  over  symmetric tensor fields of Schwartz class.
\end{abstract}
\textbf{Keywords:} Saint Venant operator, integral moment ray transforms, integral geometry, tensor tomography
%
	\section{Introduction}\label{sec:introduction}

The space of covariant symmetric $m$-tensor fields on $\Rb^n$ with components in the Schwartz space 
 will be denoted by  $\Sc(S^m)$. 
 In Cartesian coordinates, an element $f \in \mathcal{S}(S^m)$ can be written as
$$ f(x) = f_{i_1\dots i_m}(x)\, dx^{i_1} \cdots dx^{i_m}$$
where  $f_{i_1 \dots i_m} \in \Sc(\Rb^n)$ are symmetric in all indices. For repeated indices, Einstein summation convention will be assumed throughout this article. Moreover, we will not distinguish between covariant tensors and contravariant tensors since we work with the Euclidean metric. 

The \textit{Saint Venant operator} $W: C^\infty(S^m)\rightarrow  C^\infty(S^m \otimes S^m)$ is defined by 
\begin{align}\label{def:Saint Venant operator}
	(W f)_{i_1\dots i_m j_1\dots j_m} &= \sigma(i_1 \dots i_m)\,\,\sigma(j_1 \dots j_m) \sum_{\ell =0}^{m} (-1)^\ell\, \begin{pmatrix}
	m\\
	\ell
	\end{pmatrix}\, \frac{\partial^{m} f_{i_1\dots i_{m-\ell}j_1\dots j_\ell}}{\partial x^{j_{\ell+1}}\dots \partial x^{j_{m}}\partial x^{i_{m-\ell+1}}\dots \partial x^{i_{m}}},
\end{align}
where $\sigma$ is the symmetrization operator defined below (please see equation \eqref{eq:definition of sigma}). This operator $W$ was named after the French mathematician Barr\'e de Saint Venant. In one dimension, the Saint Venant operator describes the unsteady water flow and simplifies the shallow water equations.  This operator appears in various fields such as deformation theory, elasticity, and many more (see \cite{Sombuddha_elastodynamics} and the references therein).

For a vector field $f$ ($m=1$) the equation $Wf = 0$ gives the well known integrability condition $ \frac{\partial f_i}{\partial x^j} - \frac{\partial f_j}{\partial x^i}=0$ for Pfaff form. For a symmetric 2-tensor field $f$ the condition $Wf = 0$ reduces to 
$ \frac{\partial f_{ij}}{\partial^2 x^k \partial x^\ell}+\frac{\partial f_{k\ell}}{\partial^2 x^i \partial x^j}-\frac{\partial f_{i\ell}}{\partial^2 x^k \partial x^j}-\frac{\partial f_{kj}}{\partial^2 x^i \partial x^\ell}=0$
which was derived by Saint Venant and usually called as the deformations compatibility condition. This paper aims to describe the kernel of the integral moment transforms using a  generalized version of the Saint Venant operator on the Schwartz class of tensor fields.

For a non-negative integer $q \geq 0$, the $q$-th integral moment transform of a symmetric $m$-tensor field is the function $I^q :{\Sc}(S^m)\rightarrow{\Sc}(\tn)$ given by  \cite{Sharafutdinov_Generalized_Tensor_Fields,Sharafutdinov1994}:
\begin{equation}\label{eq:definition of momentum ray transform}
(I^q f)(x,\xi)=\int\limits_{-\infty}^\infty t^q\langle f(x+t\xi),\xi^m\rangle dt = \int\limits_{-\infty}^\infty t^q f_{i_1\dots i_m}(x+t\xi)\,\xi^{i_1} \cdots \xi^{i_m} dt,
\end{equation}
where $T{\Sb}^{n-1}=\{(x,\xi)\in{\Rb}^n\times{\Rb}^n\mid |\xi|=1,\langle x,\xi\rangle=0\}$ denotes the space of oriented lines in $\Rb^n$. These transforms were introduced by Sharafutdinov  
and have been investigated by many authors (see for instance \cite{Anuj_Rohit,Krishnan2018,Krishnan2019a,Rohit_Suman2020,BKS_21} and the references therein). 


Observe that the right hand side (R.H.S.) of \eqref{eq:definition of momentum ray transform} is valid even for $ (x,\xi) \in \Rb^n \times \Rb^n\setminus\{0\} $. Therefore, we also  define the extended integral moment transforms $ J^q:{\Sc}(S^m)\rightarrow C^{\infty}\left(\Rb^n \times \Rb^n\setminus\{0\}\right) $ by 
\begin{equation}\label{eq:definition of extended momentum ray transform}
(J^q f)(x,\xi)=\int\limits_{-\infty}^\infty t^q\langle f(x+t\xi),\xi^m\rangle dt = \int\limits_{-\infty}^\infty t^q f_{i_1\dots i_m}(x+t\xi)\,\xi^{i_1} \cdots \xi^{i_m} dt.
\end{equation}
For any fixed integer $k\ge 0$,  the data $(I^0 f, I^1 f, \dots , I^k f)$ and $(J^0 f, J^1 f, \dots , J^k f)$ are equivalent, in fact, there is an explicit relation between these operators (see \cite{Krishnan2018}) \begin{equation}\label{eq:relation between Ik and Jk}
(J^q f)(x,\xi)=|\xi|^{m-2q-1}\sum\limits_{\ell=0}^q(-1)^{q-\ell}{q\choose\ell}\,|\xi|^\ell\, \langle\xi,x\rangle^{q-\ell}\,(I^\ell f)
\left(x-\frac{\langle x, \xi \rangle}{|\xi|^2}\xi,\frac{\xi}{|\xi|}\right).
\end{equation}
The operators $ I^qf(x,\xi) $ obey nice decay property in the first variable. On the other hand the operators $ J^qf(x,\xi) $ are smooth with respect to  both variable and the partial derivatives $ \PD_{x^i}, \PD_{\xi^i} $ are well defined on $ J^q f $.

 We denote the collection of  first $(k+1)$ integral moment transforms of $f \in \Sc(S^m)$ by $\Ic^k f$. More specifically, the operator $\Ic^k: \mathcal{S}(S^m)\rightarrow \left(\mathcal{S}(\tn)\right)^{k+1}$ is  defined by
\begin{align}\label{eq:definition of Ik moment transforms}
 \Ic^k(f)(x, \xi) =  \left(I^0 f(x, \xi),I^1 f(x, \xi),\dots, I^k f(x, \xi)\right), \quad \mbox{ for } (x, \xi) \in \tn.   
\end{align} 
The case $ k=0 $, $\Ic^0=I=I^0$,  corresponds to the classical ray transform of symmetric $m$-tensor fields in $\Rb^n$ and it is well known that $I^0$ has a non-trivial kernel consisting of all potential tensor fields.  An equivalent way to describe the kernel of $I^0$ for  compactly supported symmetric $m$ ($m>0$) tensor fields was presented in terms of Saint Venant operator by Sharafutdinov \cite[Theorem 2.2.1]{Sharafutdinov1994}.  
Additionally, the kernel of the operator $\Ic^k$ was also discussed for compactly supported tensor fields \cite[Theorem 2.1.7.2]{Sharafutdinov1994} in terms of generalized Saint Venant operator $ W^k$ (defined in the next section).   In this article,  we aim to study the operator $W^k$ in  detail to give an alternate kernel description (similar to \cite[Theorem 2.1.7.2]{Sharafutdinov1994}) for the operator $ \Ic^k $ on Schwartz class of symmetric $m$-tensor fields (see Theorem \ref{th:equivalent description of kernel} for more details).  The proofs are completely new and based on the ideas developed by authors in their recent article \cite{Rohit_Suman2020}. 



The rest of the article is organized as follows.  In Section \ref{sec:Def and notation}, we define certain differential operators (including $W^k$) that we use throughout the article. Then we state some known results for the integral moment transforms. Section \ref{sec: Main results} contains the main results of this article and their proofs. 
\section{Preliminaries}\label{sec:Def and notation}
In this section, we recall some known facts (including definitions, notations, and lemmas) about the integral moment transforms and Saint Venant operator, which we will be using throughout this article.  A detailed discussion for these facts can be found in \cite{Krishnan2018} and  also in the book \cite[Chapter 2]{Sharafutdinov1994}.
\subsection{Some differential operators}
 Let $T^m = T^m (\mathbb{R}^n$) denotes the space of $m$-tensors on $\Rb^n$. There is a natural projection of $T^m$ onto the space of symmetric tensors $S^m$, $\sigma : T^m \rightarrow S^m$ given by  
\begin{align}\label{eq:definition of sigma}
(\sigma v)_{i_1 \dots i_m}=\sigma(i_1 \dots i_m) v = \frac{1}{m!}\sum_{\pi \in \Pi_m} v_{\pi(i_1)\dots \pi(i_m)},  \quad\mbox{ for } \quad  v \in T^m
\end{align}
where $\Pi_m$ is the set of permutation of order $m$.

Using this symmetrization operator $\sigma$,  we define the
operator of \textit{inner differentiation} or \textit{symmetrized derivative} $\D:C^\infty(S^m)\rightarrow C^\infty(S^{m+1})$ by
	$$(\D u)_{i_1\dots i_mi_{m+1}} = \sigma(i_1, \dots , i_m) \left(\frac{\partial u_{i_1\dots i_m} }{\partial x_{i_{m+1}}}\right), \quad \mbox{ where } \sigma  \mbox{ is defined in } \eqref{eq:definition of sigma}.$$
	
Given a symmetric $m$-tensor field, we define a symmetric $(m -\ell)$-tensor field  $f^{i_1\cdots i_\ell}$ obtained from $ f $ by fixing the first $\ell$ indices $ i_1,\dots,i_\ell$. This can be done by fixing any $\ell$ indices. Due to symmetry it is enough to fix the first $\ell$ indices, that is,
	\begin{equation}\label{def: of restricted tensor field}
f^{i_1\cdots i_\ell}_{j_1\dots j_{m-\ell}}= f_{i_1\dots i_\ell j_1\dots j_{m-\ell}}, \quad \mbox{ where } i_1,\dots, i_{\ell} \  \mbox{ are fixed.}
	\end{equation} 
\noindent Next, we introduce the generalized Saint Venant operator, the primary object of study in this article. 
\begin{definition}[Generalized Saint Venant operator,\cite{Sharafutdinov1994}]\label{def:deneralized Snt _Venant_ope}
	For $m \geq 0$ and $ 0\le k \le m$, the generalized Saint Venant operator (of order $k$) $ W^k:C^\infty(S^m)\rightarrow C^\infty(S^{m-k} \otimes S^m) $ is defined by the equality 
\small{\begin{align}\label{eq:Genralized Saint-Venant operator}
&	(W^k f)_{p_1\dots p_{m-k} q_1\dots q_{m-k} i_1\dots i_k}\nonumber\\ & \qquad \qquad = \sigma(p_1 \dots p_{m-k})\sigma(q_1 \dots q_{m-k} i_1\dots i_k) \sum_{\ell =0}^{m-k} (-1)^\ell \begin{pmatrix}
	m-k\\
	\ell
	\end{pmatrix} 	\frac{\partial^{m-k} f^{i_1\dots i_k}_{p_1\dots p_{m-k-\ell}q_1\dots q_\ell}}{\partial x^{p_{m-k-\ell+1}}\dots \partial x^{p_{m-k}}\partial x^{q_{\ell+1}}\dots \partial x^{q_{m-k}}}.
	\end{align}}
\end{definition}
\noindent Note that, $W^k$ is a differential operator of order $(m-k)$.
\noindent For $k=0$, this is well known Saint-Venant operator $W$ defined above (see equation \eqref{def:Saint Venant operator}). There is an equivalent way to define the Saint Venant operator $W$ which we discuss next. This equivalent formulation will be used   to simplify several calculations. 
\begin{definition}\cite[Chapter 2]{Sharafutdinov1994}
	We define the operator $R : \mathcal{S}(S^m)\rightarrow \mathcal{S}(T^{2m})$ as follows
	\begin{align}\label{def:R}
	(Rf)_{i_1j_1 \dots i_mj_m}= \alpha(i_1j_1)\alpha(i_2j_2) \dots \alpha(i_mj_m) \frac{\PD^m\!f_{i_1\dots i_m}}{\PD x^{j_1}\dots \PD x^{j_m}}
	\end{align}
	where $\alpha(i_1i_2)$ gives \textit{alternation} with respect to two indices, that  is,  $$ \alpha(i_1i_2)g_{i_1i_2\cdots i_m}= \frac{1}{2}\left( g_{i_1i_2\cdots i_m}- g_{i_2i_1\cdots i_m}\right), \qquad \mbox{ for } g \in T^m(\mathbb{R}^n).$$
\end{definition}
\noindent The operators $R$ and $W$ are equivalent in the sense that they satisfy the following two relations \cite[Equations 2.4.6 and 2.4.7] {Sharafutdinov1994}:
\begin{equation}\label{relation_bet_rf_wf}
\begin{aligned}
(Wf)_{i_1\dots i_mj_1\dots j_m}&= \sigma(i_1\dots i_m)\sigma(j_1\dots j_m) \, (Rf)_{i_1j_1 \dots i_mj_m},\\
(Rf)_{i_1j_1 \dots i_mj_m}&= (m+1) \alpha(i_1j_1)\alpha(i_2j_2) \dots \alpha(i_mj_m) \, (Wf)_{i_1\dots i_mj_1\dots j_m}.
\end{aligned}
\end{equation}
\subsection{Some known results for integral moment transforms}
The extended $q$-th integral moment ray transform of the tensor field $f^{i_1\cdots i_\ell}$  for any fixed choice of $i_1,\dots,i_\ell$ will be denoted by $J^q f^{i_1\cdots i_\ell} (x, \xi)$, for any integer $ q \ge 0$.  The following result \cite{Rohit_Suman2020} provides a way to compute the ray transform of $f^{i_1\cdots i_k}$ from the knowledge of $\Ic^k f$ for $ 0 \leq k \leq m$.
\begin{lemma}\cite[Lemma 7]{Rohit_Suman2020}\label{inversion}
The  following identity holds for any $f \in \mathcal{S}(S^m)$:	
	\begin{equation}\label{inversion_general}
	J^0f^{i_1\cdots i_r}= \frac{ (m-r)!}{m!}\sigma(i_1\dots i_r)\sum_{p=0}^{r} (-1)^p \binom{r}{p}\, \frac{\partial^r J^pf}{\partial x^{i_1}\dots\partial x^{i_p}\partial\xi^{i_{p+1}}\dots\partial\xi^{i_r}}, \quad \mbox{ for  } \ \ 1\le i_1,\dots,i_r\le n.
	\end{equation}
\end{lemma}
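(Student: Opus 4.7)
The plan is to establish a one-step differential recursion for the moment transforms and then iterate. Differentiating
\[
J^p f(x,\xi) = \int_{-\infty}^{\infty} t^p f_{\alpha_1\dots\alpha_m}(x+t\xi)\,\xi^{\alpha_1}\cdots\xi^{\alpha_m}\, dt
\]
with respect to $\xi^j$ under the integral sign gives two contributions: hitting the argument $x+t\xi$ yields an extra factor $t$ and a spatial derivative $\partial_{x^j}$, while hitting the monomial $\xi^{\alpha_1}\cdots\xi^{\alpha_m}$ produces $m$ identical terms (by the symmetry of $f$) whose sum is exactly $m\cdot J^p f^j$. This yields the fundamental recursion
\[
m\, J^p f^j(x,\xi) \;=\; \partial_{\xi^j} J^p f(x,\xi) \;-\; \partial_{x^j} J^{p+1} f(x,\xi),
\]
valid for every $f\in\Sc(S^m)$ and every integer $p\ge 0$. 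This is exactly the case $r=1$ of the lemma, up to the factor $1/m=(m-1)!/m!$.

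The general case will be handled by induction on $r$. The key observation is that $f^{i_r}$ is itself a symmetric $(m-1)$-tensor field in $\Sc(S^{m-1})$, and the symmetry of $f$ forces $(f^{i_r})^{i_1\dots i_{r-1}}=f^{i_1\dots i_r}$. Applying the inductive hypothesis (with $m,r$ replaced by $m-1,r-1$) to the tensor $f^{i_r}$ expresses $J^0 f^{i_1\dots i_r}$ in terms of $J^p f^{i_r}$ for $0\le p\le r-1$; each such quantity is then rewritten via the base recursion as a combination of $J^p f$ and $J^{p+1} f$. The prefactor transforms correctly since $\frac{(m-r)!}{(m-1)!}\cdot\frac{1}{m}=\frac{(m-r)!}{m!}$, and collecting terms of a given order $p$ in $x$-derivatives uses Pascal's identity $\binom{r-1}{p}+\binom{r-1}{p-1}=\binom{r}{p}$ to produce the binomial coefficients asserted in the formula.

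The main obstacle is the bookkeeping with the symmetrization operator. In the inductive step the symmetrizer $\sigma(i_1\dots i_{r-1})$ delivered by the hypothesis acts only on the first $r-1$ indices, while the index $i_r$ appears in a privileged position (as either an $x$- or a $\xi$-derivative, depending on which piece of the recursion is used). However, because the partial derivatives in $x$ and $\xi$ commute among themselves and the two pieces of the recursion combine in a symmetric way, the resulting expression turns out to be symmetric in all $r$ indices. Consequently $\sigma(i_1\dots i_{r-1})$ may be freely replaced by the full symmetrizer $\sigma(i_1\dots i_r)$, and the distinction carried by the index $i_r$ is absorbed into the binomial coefficient $\binom{r}{p}$ via the Pascal accounting above. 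Differentiation under the integral sign is justified throughout by the Schwartz hypothesis on $f$, so no boundary contributions arise.
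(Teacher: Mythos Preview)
The paper does not supply a proof of this lemma; it is quoted verbatim from \cite[Lemma~7]{Rohit_Suman2020} and used as input for the subsequent arguments. So there is no in-paper proof to compare against.

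Your argument is correct. The one-step identity
\[
m\,J^p f^{j}=\partial_{\xi^j}J^p f-\partial_{x^j}J^{p+1}f
\]
follows exactly as you describe, by differentiating under the integral and splitting the $\xi$-derivative into the contribution from the argument $x+t\xi$ and the contribution from the monomial $\xi^{\alpha_1}\cdots\xi^{\alpha_m}$. The inductive step is also sound: applying the hypothesis for rank $m-1$ and length $r-1$ to $f^{i_r}\in\Sc(S^{m-1})$, then substituting the one-step identity for each $J^p f^{i_r}$, produces the correct prefactor $\frac{(m-r)!}{m!}$ and, after full symmetrization, the Pascal recombination $\binom{r-1}{p}+\binom{r-1}{p-1}=\binom{r}{p}$.

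The one point that deserves an explicit sentence (you allude to it but do not quite spell it out) is the justification for upgrading $\sigma(i_1\dots i_{r-1})$ to $\sigma(i_1\dots i_r)$. The clean way to say it is: the left-hand side $J^0 f^{i_1\dots i_r}$ is symmetric in all $r$ indices because $f$ is a symmetric tensor; hence the right-hand side is already invariant under $\sigma(i_1\dots i_r)$, and since $\sigma(i_1\dots i_r)\circ\sigma(i_1\dots i_{r-1})=\sigma(i_1\dots i_r)$, one may replace the partial symmetrizer by the full one. After full symmetrization the two derivative patterns arising from the two halves of the recursion coincide (both have exactly $p$ $x$-derivatives and $r-p$ $\xi$-derivatives among $i_1,\dots,i_r$), which is what permits the Pascal combination. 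With that sentence added, the proof is complete.
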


\begin{lemma}\cite[Lemma 2.6]{Krishnan2019a} \label{L2.5}
	Let a function $\psi\in C^\infty\big({\Rb}^n\times{\Rb}^n\setminus\{0\})\big)$ be positively homogeneous of degree $\lambda$ in the second argument
	\begin{equation}
	\psi(x,t\xi)=t^\lambda\psi(x,\xi)\quad(t>0).
	\label{Eq2.10}
	\end{equation}
 Assume the restriction $\psi\left|_{T{\mathbb S}^{n-1}} \in {\mathcal S}(T{\mathbb S}^{n-1}) \right.$. Further assume the restriction of  $\l\xi,\partial_x\r\psi$  and
 all its derivatives to $T{\mathbb S}^{n-1}$ belong to ${\mathcal S}(T{\mathbb S}^{n-1})$, that is,
	\begin{equation}
	\left.\frac{\partial^{k+\ell}(\l\xi,\partial_x\r\psi)}{\partial x^{i_1}\dots\partial x^{i_k}\partial \xi^{j_1}\dots\partial \xi^{j_\ell}}\right|_{T{\mathbb S}^{n-1}}\in{\mathcal S}(T{\mathbb S}^{n-1})\quad\mbox{for all}\quad 1\le i_1,\dots,i_k,j_1,\dots,j_\ell\le n.
	\label{Eq2.11}
	\end{equation}
	Then the restriction to $T{\mathbb S}^{n-1}$ of every derivative of $\psi$ also belongs to ${\mathcal S}(T{\mathbb S}^{n-1})$, i.e.,
	\begin{equation}
	\left.\frac{\partial^{k+\ell}\psi}{\partial x^{i_1}\dots\partial x^{i_k}\partial \xi^{j_1}\dots\partial \xi^{j_\ell}}\right|_{T{\mathbb S}^{n-1}}
	\in{\mathcal S}(T{\mathbb S}^{n-1})\quad\mbox{for all}\quad 1\le i_1,\dots,i_k,j_1,\dots,j_\ell\le n.
	\label{Eq2.12}
	\end{equation}
\end{lemma}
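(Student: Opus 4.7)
The plan is to introduce coordinates on $\Rb^n\times(\Rb^n\setminus\{0\})$ adapted to the submanifold $\tn$, namely $r=|\xi|$, $\hat\xi=\xi/r$, $s=\langle x,\hat\xi\rangle$, and $x_\perp = x - s\hat\xi$, so that $\tn$ is the slice $\{s=0,\,r=1\}$ with $(x_\perp,\hat\xi)$ as the induced coordinates. By the homogeneity assumption \eqref{Eq2.10} we factor
\begin{equation*}
\psi(x,\xi) = r^\lambda \Phi(x_\perp,\hat\xi,s),\qquad \Phi(x_\perp,\hat\xi,s) := \psi(x_\perp+s\hat\xi,\hat\xi).
\end{equation*}
The chain rule expresses any $\partial_x^\gamma\partial_\xi^\delta\psi$ as a finite sum of terms, each being a polynomial in $\hat\xi$, $x_\perp$, $s$, and $1/r$ times a pure derivative $\partial_{x_\perp}^{\alpha'}\partial_{\hat\xi}^{\beta'}\partial_s^{j}\partial_r^{k}\psi$. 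Restricting to $\tn$ substitutes $(x_\perp,\hat\xi,s,r)=(x,\xi,0,1)$ in these coefficients; since $\xi$ then stays on the unit sphere and multiplication by polynomials in $x$ preserves $\mathcal{S}(\tn)$, it suffices to prove that every such pure derivative of $\psi$ has Schwartz restriction to $\tn$.

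Homogeneity disposes of the $r$-derivatives: $\partial_r^k\psi = \lambda(\lambda-1)\cdots(\lambda-k+1)\,r^{\lambda-k}\Phi$, which at $r=1$ is a constant multiple of $\Phi$. The tangential derivatives $\partial_{x_\perp}^{\alpha'}\partial_{\hat\xi}^{\beta'}$ commute with the evaluations $s=0$ and $r=1$, so the task further reduces to proving that
\begin{equation*}
\Phi^{(j)}(x_\perp,\hat\xi) := \partial_s^j\Phi(x_\perp,\hat\xi,s)\big|_{s=0}\in\mathcal{S}(\tn)\qquad\text{for all }j\ge 0.
\end{equation*}
Since $\partial_s = \hat\xi^i\partial_{x^i}$ and $|\xi|=1$ on $\tn$, one computes $\Phi^{(j)} = (\langle\xi,\partial_x\rangle^j\psi)\big|_{\tn}$. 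The case $j=0$ is the first hypothesis; for $j\ge 1$, using that $\xi$ is independent of $x$ we factor out one $\langle\xi,\partial_x\rangle$,
\begin{equation*}
\langle\xi,\partial_x\rangle^j\psi = \xi^{i_1}\cdots\xi^{i_{j-1}}\,\partial_{x^{i_1}}\cdots\partial_{x^{i_{j-1}}}\!\bigl(\langle\xi,\partial_x\rangle\psi\bigr),
\end{equation*}
so the restriction to $\tn$ is a sum of products of bounded monomials $\xi^{i_1}\cdots\xi^{i_{j-1}}$ with derivatives of $\langle\xi,\partial_x\rangle\psi$ restricted to $\tn$; all of these are Schwartz by hypothesis \eqref{Eq2.11}. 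Hence $\Phi^{(j)}\in\mathcal{S}(\tn)$, completing the reduction.

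I expect the main subtlety to be the bookkeeping in the first step: one must verify that, in the chain-rule expansion of $\partial_x^\gamma\partial_\xi^\delta$ in the adapted frame, the inverse powers of $r$ produced by iterating $\partial_\xi$ together with the $x_\perp$- and $s$-factors collapse upon restriction to $\{r=1,s=0\}$ into coefficients that are at worst polynomial in $(x,\xi)$, so that multiplication by them preserves the Schwartz class on $\tn$.
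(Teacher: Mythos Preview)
The paper does not prove this lemma; it is quoted verbatim from \cite[Lemma~2.6]{Krishnan2019a} and used only as a black box in the proof of Theorem~\ref{equi_of_ik and w^k}. There is therefore no in-paper argument to compare your attempt against.

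That said, your outline is sound. The adapted frame $(x_\perp,\hat\xi,s,r)$ is the natural one for this statement, and the two reductions are correct: homogeneity kills the $r$-derivatives, and the identity $\partial_s^{\,j}\Phi\big|_{s=0}=(\langle\xi,\partial_x\rangle^{j}\psi)\big|_{\tn}$ reduces the normal $s$-derivatives to exactly the restrictions supplied by \eqref{Eq2.11} (for $j\ge1$) and by the first hypothesis (for $j=0$). The point you flag yourself---that iterated $\partial_{\xi^i}$ produce coefficients involving $x_\perp$, $s$, $\hat\xi$, $r^{-1}$---is real but harmless: one checks directly that $\partial_{x^i}$ and $\partial_{\xi^i}$, written in the adapted frame, have coefficients that are polynomial in $(\hat\xi,x_\perp,s,r^{-1})$, and this class of coefficients is preserved under further differentiation in the adapted frame; after setting $s=0$, $r=1$ they become polynomials in $(x,\xi)$ with $|\xi|=1$, hence multipliers on $\mathcal{S}(\tn)$. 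Finally, passing from $\Phi^{(j)}\in\mathcal{S}(\tn)$ to $\partial_{x_\perp}^{\alpha'}\partial_{\hat\xi}^{\beta'}\Phi^{(j)}\in\mathcal{S}(\tn)$ uses only that the Schwartz class on $\tn$ is closed under smooth tangential vector fields. The one cosmetic point to tidy is that $x_\perp$ and $\hat\xi$ are each $(n-1)$-dimensional, so writing $\partial_{x_\perp^{j}}$ and $\partial_{\hat\xi^{j}}$ with $1\le j\le n$ is overcomplete; working in local coordinates on $\tn$ removes this ambiguity without affecting the argument.
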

\section{Main results and their proofs}\label{sec: Main results}
\noindent 
The main result of the article provides a kernel description of the operator $\Ic^k$ in terms of the generalized Saint Venant operator $W^k$. An equivalent kernel description is also presented in terms of potential tensor fields, but this description is a bit restrictive as discussed in the second theorem below. 

\begin{theorem}\label{equi_of_ik and w^k}	
	Let $ f\in \mathcal{S}(S^m) $ in $ \mathbb{R}^n\, (n\ge 2) $ and $ 0\le k\le m $. Then  $ \Ic^k f=0  $ if and only if $ W^k f=0 $. That is,  the operators $\Ic^k$ and $W^k$ have the same kernel.
\end{theorem}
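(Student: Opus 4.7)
The proof splits into two implications, which I would handle by different routes.

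For the forward direction ($\Ic^k f = 0 \Rightarrow W^k f = 0$), the plan is to reduce to the classical Saint Venant theorem applied to the fixed-index tensor fields $f^{i_1 \dots i_k}$. Using the explicit invertible relation \eqref{eq:relation between Ik and Jk}, the hypothesis $\Ic^k f = 0$ is equivalent to $J^q f = 0$ for $q = 0, 1, \dots, k$. Lemma \ref{inversion} then yields $J^0 f^{i_1 \dots i_k} = 0$ for every choice of fixed indices. Since $f^{i_1 \dots i_k} \in \Sc(S^{m-k})$, the classical Saint Venant theorem for Schwartz-class tensor fields gives $W f^{i_1 \dots i_k} = 0$. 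Finally, a direct comparison of Definition \ref{def:deneralized Snt _Venant_ope} with the formula for $W$ applied to $f^{i_1 \dots i_k}$ shows that $(W^k f)_{p_1 \dots p_{m-k} q_1 \dots q_{m-k} i_1 \dots i_k}$ equals $\sigma(q_1 \dots q_{m-k} i_1 \dots i_k)(Wf^{i_1 \dots i_k})_{p_1 \dots p_{m-k} q_1 \dots q_{m-k}}$, since symmetrization is idempotent and $\sigma(q_1 \dots q_{m-k} i_1 \dots i_k) \supset \sigma(q_1 \dots q_{m-k})$; hence the pointwise vanishing of each $Wf^{i_1 \dots i_k}$ forces $W^k f = 0$.

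The converse ($W^k f = 0 \Rightarrow \Ic^k f = 0$) is harder, since $W^k f = 0$ only forces a symmetrized combination of the various $Wf^{i_1 \dots i_k}$ (permuting $q$'s with $i$'s) to vanish, not each one individually. My strategy is to contract $W^k f$ with $\xi^{p_1}\cdots\xi^{p_{m-k}}$ on the first block of indices and integrate along the line $t \mapsto x + t\xi$. By the fundamental theorem of calculus and the Schwartz decay of $f$, every term involving the directional derivative $(\xi \cdot \partial_x)^\ell$ with $\ell \geq 1$ integrates to zero, leaving the identity
\[
\sigma(q_1 \dots q_{m-k} i_1 \dots i_k)\,\frac{\partial^{m-k}}{\partial x^{q_1}\cdots \partial x^{q_{m-k}}}\, J^0 f^{i_1 \dots i_k}(x,\xi) = 0.
\]
This is a generalized Killing-type PDE for the symmetric-$k$-tensor-valued function $J^0 f^{i_1 \dots i_k}(\cdot, \xi)$; equivalently, $\D^{m-k}$ of this tensor field (in $x$) vanishes, so $J^0 f^{i_1 \dots i_k}$ is a polynomial in $x$ of controlled degree. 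The positive homogeneity of $J^0 f^{i_1 \dots i_k}$ in $\xi$ and Lemma \ref{L2.5} guarantee Schwartz decay of this function and its derivatives on $T\mathbb{S}^{n-1}$, and the only polynomial in $x$ whose restriction to the hyperplane $\xi^\perp$ is Schwartz is the zero polynomial. Hence $J^0 f^{i_1 \dots i_k} \equiv 0$ for every choice of $(i_1, \dots, i_k)$. Finally, differentiating \eqref{eq:definition of extended momentum ray transform} yields the cascade relation $\partial_{\xi^j} J^q f = \partial_{x^j} J^{q+1} f + m J^q f^j$; iterating this relation and using Schwartz decay at each step lifts the vanishing of $J^0 f^{i_1 \dots i_r}$ for all $r \leq k$ to $J^q f = 0$ for $0 \leq q \leq k$, which gives $\Ic^k f = 0$ via \eqref{eq:relation between Ik and Jk}.

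The main obstacle lies in the converse direction, specifically in extracting the pointwise vanishing of $J^0 f^{i_1 \dots i_k}$ from the symmetrized PDE. The symmetrization over the combined $(q, i)$ indices obscures component-wise information, so one must carefully combine the Killing-tensor characterization of polynomial solutions with the algebraic relations inherent to the family $\{J^0 f^{i_1 \dots i_k}\}$ (for instance, $\xi^{i_1} J^0 f^{i_1 i_2 \dots i_k}(x, \xi) = J^0 f^{i_2 \dots i_k}(x, \xi)$) and the Schwartz decay on $T\mathbb{S}^{n-1}$ to eliminate every admissible polynomial. A subsidiary technical issue is establishing the Schwartz-class version of the classical Saint Venant theorem used in the forward direction, which I would either cite or obtain by carefully tracking decay through Sharafutdinov's compactly supported proof.
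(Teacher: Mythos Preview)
Your approach is essentially correct, but it diverges from the paper's in two places worth noting.

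For the forward direction, the paper does not invoke a Schwartz-class Saint Venant theorem at all: Lemma~\ref{th:generalized saint vanent operator} shows directly that $W^k f$ can be reconstructed from $(J^0f,\dots,J^kf)$ by applying the John operators $\Jc_{pq}$ repeatedly to $J^0 f^{i_1\dots i_k}$ and then inverting the scalar X-ray transform. This is self-contained and avoids the circularity you flag at the end (needing the $k=0$ case of the very theorem you are proving). Your reduction to $J^0 f^{i_1\dots i_k}=0 \Rightarrow W f^{i_1\dots i_k}=0$ is valid, but you then still owe a separate proof of the $k=0$ Schwartz case.

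For the converse, your derivation of the key identity is actually cleaner than the paper's: you contract $W^k f$ with $\xi^{p_1}\cdots\xi^{p_{m-k}}$ and integrate along the line, whereas the paper (Lemma~\ref{Lm_1}) routes through the John operator and the $R\leftrightarrow W$ relations before arriving at exactly the same symmetrized PDE. Where you and the paper differ substantially is in what happens next. You propose to solve the tensor equation $\d^{m-k} g = 0$ for $g_{i_1\dots i_k}=J^0 f^{i_1\dots i_k}(\cdot,\xi)$ directly, using that tempered solutions must be polynomials and then killing them by Schwartz decay on $\xi^{\perp}$. The paper avoids the tensor Killing characterization entirely: it first contracts the level-$k$ PDE with $\xi^{i_1}\cdots\xi^{i_{k-r}}$ (Lemma~\ref{additional_prop}) to obtain the same PDE at every lower level $r\le k$, and at $r=0$ this is the scalar equation $\partial_x^{\,m}J^0 f=0$, from which Schwartz decay (via Lemma~\ref{L2.5}) gives $J^0 f=0$; then it climbs $r=1,2,\dots,k$ inductively in exactly the way your final cascade paragraph describes. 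So the paper trades your single tensor-level argument for a sequence of scalar ones. Your route is shorter in principle but requires justifying the polynomial structure of $\ker\d^{m-k}$ on tempered distributions (a short Fourier argument using injectivity of the symbol $v\mapsto \sigma(\eta^{\otimes(m-k)}\otimes v)$ suffices), which you correctly identify as the main obstacle but do not fully carry out.
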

The proof of this theorem is completely new even for $ k=0$. Note that we do not have any restriction on the dimension which arises naturally in \cite[Theorem 6]{Rohit_Suman2020}.  The following result uses the dimension restriction coming from \cite{Rohit_Suman2020} to relate the above result with \cite[Theorem 6]{Rohit_Suman2020}.  This theorem is known for compactly supported symmetric tensor fields in the case $k=0$ \cite[Theorem 2.2.1]{Sharafutdinov1994}.



	\begin{theorem}\label{th:equivalent description of kernel}
		Let $ f\in \mathcal{S}(S^m) $ and $ k $ be an integer such that $ 1\le k\le \min\{m,n-1\}  $. Then the following conditions are equivalent:
		\begin{itemize}
			\item[(1)] $ \Ic^kf=0.$ 
			\item[(2)] $ f =  \d^{k+1} v$, for some  $(m-k-1)$-tensor field $v$ satisfying  $\d^{\ell}v \rightarrow 0 $ as $ |x| \rightarrow \infty$ for $ 0\le \ell\le k $.
			\item[(3)] $ W^kf=0 $.
 		\end{itemize} 
	\end{theorem}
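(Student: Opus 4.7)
The plan is to combine two pieces: Theorem \ref{equi_of_ik and w^k}, which has just been established and gives $(1)\iff(3)$ unconditionally in the dimension, with the main result of \cite{Rohit_Suman2020}, which gives $(1)\iff(2)$ but only under the restriction $k\le n-1$. Under the hypothesis $1\le k\le\min\{m,n-1\}$ both inputs apply, so the three-way equivalence follows. Accordingly I would organize the argument as: first invoke Theorem \ref{equi_of_ik and w^k} to dispose of $(1)\Leftrightarrow(3)$, then verify $(2)\Rightarrow(1)$ by hand, and finally obtain $(1)\Rightarrow(2)$ by quoting \cite[Theorem 6]{Rohit_Suman2020}.

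For $(2)\Rightarrow(1)$, the computation is along single lines. Setting $V(t):=\langle v(x+t\xi),\xi^{m-k-1}\rangle$, repeated contraction gives $V^{(\ell)}(t)=\langle\d^\ell v(x+t\xi),\xi^{m-k-1+\ell}\rangle$, and in particular $\langle\d^{k+1}v(x+t\xi),\xi^m\rangle=V^{(k+1)}(t)$. Hence, for $0\le q\le k$,
\[
  I^q f(x,\xi)=\int_{-\infty}^{\infty} t^q V^{(k+1)}(t)\,\d t.
\]
Integrating by parts $q+1$ times, the bulk integrand acquires the factor $\tfrac{\d^{\,q+1}}{\d t^{\,q+1}}(t^q)=0$; what remains are boundary terms of the form $t^{q-j}V^{(k-j)}(t)\big|_{-\infty}^{\infty}$ for $0\le j\le q\le k$. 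Each $V^{(k-j)}(t)$ is a fixed contraction of $\d^{k-j}v(x+t\xi)$ with symmetric powers of $\xi$, and the standing decay hypothesis $\d^{\ell}v\to 0$ as $|x|\to\infty$ for $0\le\ell\le k$ kills these boundary contributions (combined with the fact that $f$ itself is Schwartz along the line, which forces the appropriate rate on the relevant antiderivatives). This shows $\mathcal{I}^k f=0$.

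For $(1)\Rightarrow(2)$, I would cite \cite[Theorem 6]{Rohit_Suman2020} verbatim. The construction there produces the potential $v$ by successively inverting Laplace-type equations in the Schwartz class, which is exactly the step that demands $k\le n-1$ so that the relevant solution operators behave correctly at infinity; the decay assertion $\d^{\ell}v\to 0$ for $0\le\ell\le k$ is built into their construction.

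The only genuine difficulty is the boundary-term analysis in $(2)\Rightarrow(1)$: the given decay of $\d^\ell v$ is only pointwise, whereas the boundary terms are multiplied by polynomial weights $t^{q-j}$. The clean way around this is to observe that, since $f=\d^{k+1}v$ is Schwartz, the function $V^{(k+1)}(t)$ is Schwartz in $t$, so its successive antiderivatives $V^{(k-j)}(t)$ automatically inherit enough decay at $\pm\infty$ to absorb the polynomial weight; the pointwise hypotheses $\d^\ell v\to 0$ merely pin down the additive polynomial ambiguities in these antiderivatives. Beyond this bookkeeping, no new ideas are needed and the theorem follows.
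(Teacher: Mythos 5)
Your proposal is correct and follows essentially the same route as the paper: the paper's proof simply cites \cite[Theorem 6]{Rohit_Suman2020} for $(1)\Leftrightarrow(2)$ and Theorem \ref{equi_of_ik and w^k} for $(1)\Leftrightarrow(3)$, with $(2)\Leftrightarrow(3)$ then following formally. The only difference is that you additionally write out the $(2)\Rightarrow(1)$ integration-by-parts argument (correctly handling the boundary terms via the antiderivative decay), a direction the paper leaves entirely to the cited reference.
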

	If we assume that Theorem \ref{equi_of_ik and w^k} holds, then  this theorem's proof  follows from the known chain of equivalence relations given below.  
	\begin{proof}
		$ (1) \Leftrightarrow (2) $ follows from \cite[Theorem 6]{Rohit_Suman2020}, $(1) \Leftrightarrow (3)  $ follows from Theorem \ref{equi_of_ik and w^k} and  $ (2) \Leftrightarrow (3)  $ holds trivially. 
	\end{proof}	
The remainder of the article focuses on the proof of  Theorem \ref{equi_of_ik and w^k}. The proof of this theorem is divided into several lemmas.  
\begin{lemma}[\cite{Rohit_Suman2020}]\label{th:generalized saint vanent operator}
		Let $ f\in \mathcal{S}(S^m) $ and $ 0\le k\le m $. The generalized Saint Venant operator $ W^k\! f$  can be recovered explicitly from the knowledge of $\Ic^k f$. 
	\end{lemma}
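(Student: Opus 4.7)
The plan is a three-step reduction of the general $k$ case to the classical ($k=0$) one, passing through the restricted tensor fields $f^{i_1\cdots i_k}$ of \eqref{def: of restricted tensor field}.

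\textbf{Step 1.} First I would apply Lemma~\ref{inversion} with $r=k$: for each fixed index tuple $(i_1,\dots,i_k)$, this expresses $J^0 f^{i_1\cdots i_k}$ as an explicit linear combination of mixed $x$- and $\xi$-derivatives of the $J^p f$, $0\le p\le k$. The family $\{J^p f\}_{0\le p\le k}$ is in turn determined from $\Ic^k f = (I^0 f,\dots,I^k f)$ by the polynomial relation \eqref{eq:relation between Ik and Jk}. Together, these produce an explicit formula for the zeroth extended moment transform $J^0 f^{i_1\cdots i_k}$ of the symmetric $(m-k)$-tensor field $f^{i_1\cdots i_k}$ from the data $\Ic^k f$.

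\textbf{Step 2.} Next I would show that the classical Saint Venant operator $W$ applied to any symmetric $(m-k)$-tensor field $g\in\Sc(S^{m-k})$ can be recovered from $J^0 g$. The cleanest route is via Fourier transform in $x$: starting from \eqref{def:Saint Venant operator} and contracting both free index sets of $Wg$ with test vectors $\xi,\eta$, a binomial manipulation yields
\[
\widehat{Wg}_{i_1\cdots i_{m-k}\, j_1\cdots j_{m-k}}(y)\,\xi^{i_1}\cdots\xi^{i_{m-k}}\eta^{j_1}\cdots\eta^{j_{m-k}} = \I^{m-k}\,\l \hat{g}(y), v^{m-k}\r,
\]
where $v = \l y,\eta\r\xi - \l y,\xi\r\eta$ automatically satisfies $v\perp y$. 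By the Fourier slice theorem, the values $\l \hat{g}(y), v^{m-k}\r$ with $v\perp y$ are precisely the content of $J^0 g$. Since $\xi,\eta$ are arbitrary, this determines $\widehat{Wg}(y)$ completely, and hence $Wg$.

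\textbf{Step 3.} Applying Step~2 with $g = f^{i_1\cdots i_k}$ recovers $W(f^{i_1\cdots i_k})$. A direct comparison of \eqref{def:Saint Venant operator} and \eqref{eq:Genralized Saint-Venant operator} shows
\[
(W^k f)_{p_1\cdots p_{m-k}\, q_1\cdots q_{m-k}\, i_1\cdots i_k} = \sigma(q_1\cdots q_{m-k}\, i_1\cdots i_k)\,\bigl(W f^{i_1\cdots i_k}\bigr)_{p_1\cdots p_{m-k}\, q_1\cdots q_{m-k}},
\]
the larger outer symmetrization absorbing the inner $\sigma(q_1\cdots q_{m-k})$ already present in $W$. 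A final explicit symmetrization therefore assembles the collection $\{W(f^{i_1\cdots i_k})\}_{i_1,\dots,i_k}$ into $W^k f$.

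The main obstacle I anticipate is the Schwartz-class regularity bookkeeping: the inversion formula of Lemma~\ref{inversion} together with \eqref{eq:relation between Ik and Jk} produces derivatives of functions with nontrivial $\xi$-homogeneity on $\Rb^n\times(\Rb^n\setminus\{0\})$, and one must verify --- using the homogeneity properties of the $J^p$ and Lemma~\ref{L2.5} --- that all intermediate quantities restrict properly to $\Sc(\tn)$ so that the Fourier-theoretic recovery in Step~2 remains valid within the Schwartz framework. Once this regularity bookkeeping is in place, the three steps above assemble into an explicit reconstruction of $W^k f$ from $\Ic^k f$.
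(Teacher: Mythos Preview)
Your Steps~1 and~3 coincide exactly with the paper's argument: Lemma~\ref{inversion} reduces the data to $J^0 f^{i_1\cdots i_k}$, and the symmetrization identity you write down is the paper's equation~\eqref{Eq60}. The difference lies entirely in Step~2, where you pass from $J^0 g$ to $Wg$ for a symmetric $(m-k)$-tensor $g$.

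The paper does this by iterating the John operator $\Jc_{pq}=\partial^2_{x^p\xi^q}-\partial^2_{x^q\xi^p}$: applying $\Jc^{m-k}$ to $J^0 f^{i_1\cdots i_k}$ yields the scalar X-ray transform of each component of $R f^{i_1\cdots i_k}$ (equation~\eqref{eq: Saint-Venant}), and then one inverts the scalar X-ray transform and passes to $W$ via~\eqref{relation_bet_rf_wf}. Your route via the Fourier slice theorem and the binomial identity $\widehat{Wg}(y)[\xi^{m-k},\eta^{m-k}]=\I^{m-k}\langle\hat g(y),v^{m-k}\rangle$ with $v=\langle y,\eta\rangle\xi-\langle y,\xi\rangle\eta\perp y$ is correct and arguably cleaner for the lemma in isolation. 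The trade-off is that the paper's John-operator computation produces, as a byproduct, the identity~\eqref{eq: Saint-Venant}, which is the starting point of the Main Lemma~\ref{Lm_1} and is essential for the converse direction of Theorem~\ref{equi_of_ik and w^k}; your Fourier-side argument does not yield that intermediate identity, so if you adopt it you would still need to establish~\eqref{eq: Saint-Venant} separately (or rework the Main Lemma on the Fourier side). Your Schwartz-regularity worry is not a real obstacle here: both approaches end with an explicit inversion of a scalar transform (X-ray or Fourier), and the Schwartz class is preserved throughout.
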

	The authors proved this lemma in \cite{Rohit_Suman2020}. 
However, we prefer to sketch the proof here because some intermediate steps are essential for upcoming lemmas.To prove this lemma, we need to recall an important second order differential operator known as the John operator from \cite[Theorem 2.10,1]{Sharafutdinov1994}. The John operator is denoted by $ \Jc_{pq}: C^{\infty}(\Rn \times \Rn \setminus\{0\})\rightarrow C^{\infty}(\Rn \times \Rn \setminus\{0\})  ,\, (1 \leq p, q \leq n)$ and given by
\begin{equation}
    \Jc_{pq} = \frac{\partial^2}{\partial x^p \partial \xi^q}-\frac{\partial^2}{\partial x^q \partial \xi^p}.
\end{equation}
Please note $\Jc$ denotes the  John operator while $J$ is used for extended integral moment transform.
\begin{proof}
It is sufficient to prove that $W^k f$ can be determined from $J^0f, \dots, J^k f$ because knowing $\Ic^k f = (I^0 f, \dots , I^k f)$ is equivalent to knowing $J^0f, \dots, J^k f$.  Now, for fixed $1 \leq i_1, \dots, i_k\leq n$,  the following is known from Lemma \ref{inversion} 
	\begin{align}\label{Eq57}
	J^0f^{i_1\cdots i_k}= \frac{ (m-k)!}{m!}\sigma(i_1\dots i_k)\sum_{p=0}^{k} (-1)^p \binom{k}{p}\, \frac{\partial^k J^pf}{\partial x^{i_1}\dots\partial x^{i_p}\partial\xi^{i_{p+1}}\dots\partial\xi^{i_k}}.
	\end{align}
Applying $\Jc$ to $	J^0f^{i_1\cdots i_k}$, we obtain
	\begin{align*}
	\left(\Jc( J^0f^{i_1\cdots i_k})\right)_{p_1q_1}=2\,(m-k)\alpha(p_1q_1)\int_{\mathbb{R}} \xi^{j_1}\dots \xi^{j_{m-k-1}} \frac{\partial f^{i_1\cdots i_k}_{j_1\dots j_{m-k-1}p_1}}{\partial x^{q_1}} (x+t\xi) dt.
	\end{align*}
	Applying the John operator $(m-k-1)$ more times to the above equation and repeating the same arguments, we obtain 
	\begin{align}
	\left(\Jc^{m-k} \left(J^0f^{i_1\cdots i_k}\right)\right)_{p_1q_1\dots p_{m-k}q_{m-k}} 
	&=2^{m-k}(m-k)!\int_{\mathbb{R}} (Rf^{i_1\dots i_k})_{p_1q_1\dots p_{m-k}q_{m-k}} (x+t \xi) dt. \label{eq: Saint-Venant}
	\end{align}
The right hand side is the ray transform of  scalar function $(Rf^{i_1\dots i_k})_{p_1q_1\dots p_{m-k}q_{m-k}} $ for all possible choices of indices $1 \leq p_1, q_1, \dots , p_{m-k}, q_{m-k} \leq n$.  Thus $R f^{i_1\dots i_k}$ can be determined explicitly by inverting X-ray transform of scalar functions \cite[Theorem $2.12.2$ for $m=0$]{Sharafutdinov1994}. Knowing $ R f^{i_1\dots i_k} $ is same as knowing  $W f^{i_1\dots i_k}$ from the first relation of  \eqref{relation_bet_rf_wf}.  Finally, to complete the proof of this lemma we need to connect $W f^{i_1\dots i_k}$ and $W^k\,f $. To this end, let us write  $W f^{i_1\dots i_k}$ explicitly
\begin{align*}
&	(W f^{i_1\dots i_k})_{p_1\dots p_{m-k} q_1\dots q_{m-k}}
	\nonumber\\& \qquad  = \sigma(p_1\dots p_{m-k})\,\sigma( q_1\dots q_{m-k})
	\sum_{\ell =0}^{m-k} (-1)^\ell \begin{pmatrix}
	m-k\\
	\ell
	\end{pmatrix}  \frac{\partial^{m-k} f^{i_1\dots i_k}_{p_1\dots p_{m-k-\ell}q_1\dots q_\ell}}{\partial x^{p_{m-k-\ell+1}}\dots \partial x^{p_{m-k}}\partial x^{q_{\ell+1}}\dots \partial x^{q_{m-k}}}.
	\end{align*}
	Here, we make the following  observation 
	\begin{align}\label{Eq60}
	(W^k f)_{p_1\dots p_{m-k} q_1\dots q_{m-k} i_1\dots i_k}  &= \sigma(q_1, \dots, q_{m-k}, i_1,\dots, i_k)(Wf^{i_1\dots i_k})_{p_1\dots p_{m-k}q_1\cdots q_{m-k}}.
	\end{align}
	Now,  right-hand side of \eqref{Eq60} is completely known to us in terms of $ J^0f,\dots, J^k f $ as we discussed above. Therefore we know $W^k f$, which completes the proof.
\end{proof}

\begin{lemma}[Main Lemma]\label{Lm_1}
Let $ 0\le k <m $ and  $ W^kf=0 $ for some $ f \in \mathcal{S}(S^m) $, then we have
\begin{align}\label{m-k_derivatiove_of_J^0f_r}
\sigma(q_1, \dots, q_{m-k}, i_1,\dots, i_k)\Bigg[\frac{\PD^{m-k}}{\PD x^{q_{m-k}}\cdots \PD x^{q_1}}(J^0f^{i_1\cdots i_k})\Bigg]=0.
\end{align}
\end{lemma}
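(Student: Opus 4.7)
The approach is to derive the identity
\begin{equation*}
\int_{\mathbb{R}} \xi^{p_1}\cdots \xi^{p_{m-k}}\,(W^k f)_{p_1\cdots p_{m-k}\,q_1\cdots q_{m-k}\,i_1\cdots i_k}(x+t\xi)\, dt = \sigma(q_1,\dots, q_{m-k}, i_1,\dots, i_k)\frac{\partial^{m-k}J^0 f^{i_1\cdots i_k}}{\partial x^{q_1}\cdots \partial x^{q_{m-k}}}(x,\xi),
\end{equation*}
valid for every $f \in \mathcal{S}(S^m)$. Once this is established, Lemma \ref{Lm_1} follows immediately upon substituting the hypothesis $W^k f = 0$, which forces the left-hand side to vanish.

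To prove the identity I would substitute the explicit formula from Definition \ref{def:deneralized Snt _Venant_ope} into the left-hand side. The contraction against the symmetric tensor $\xi^{p_1}\cdots \xi^{p_{m-k}}$ makes the outer symmetrization $\sigma(p_1\dots p_{m-k})$ redundant, while the remaining symmetrization $\sigma(q_1\dots q_{m-k}\,i_1\dots i_k)$ commutes both with the $\xi$-contraction and with the $t$-integration. The left-hand side therefore reduces to $\sigma(q_1,\dots,q_{m-k},i_1,\dots,i_k)$ applied to a signed sum, indexed by $\ell = 0,1,\dots,m-k$, of integrals of the form
\begin{equation*}
(-1)^\ell \binom{m-k}{\ell}\int_{\mathbb{R}} \xi^{p_1}\cdots \xi^{p_{m-k}}\, \frac{\partial^{m-k} f^{i_1\cdots i_k}_{p_1\cdots p_{m-k-\ell}q_1\cdots q_\ell}}{\partial x^{p_{m-k-\ell+1}}\cdots\partial x^{p_{m-k}}\partial x^{q_{\ell+1}}\cdots \partial x^{q_{m-k}}}(x+t\xi)\, dt.
\end{equation*}

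The key observation is that for each $\ell \ge 1$ the $\ell$ factors $\xi^{p_{m-k-\ell+1}}\cdots\xi^{p_{m-k}}$ pair with the matching $\ell$ $x$-derivatives, and the chain-rule identity $\xi^j\partial_{x^j}[g(x+t\xi)] = \tfrac{d}{dt}g(x+t\xi)$ collapses this block into $\tfrac{d^\ell}{dt^\ell}$. The integrand is therefore a total $t$-derivative of a Schwartz function and integrates to zero. Only the $\ell = 0$ term survives, and commuting $\partial_{x^q}$ past the evaluation $y \mapsto x+t\xi$ and past the $t$-integration identifies it with the right-hand side of the displayed identity. The only delicate point is careful bookkeeping through the nested symmetrizations of $W^k f$; the Schwartz decay of $f$ is what licenses the vanishing of the $\ell \ge 1$ integrals.
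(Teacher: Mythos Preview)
Your argument is correct. The integral identity you propose does hold: after contracting the defining sum for $W^k f$ against $\xi^{p_1}\cdots\xi^{p_{m-k}}$ and integrating along the line, every term with $\ell\ge 1$ carries a block $\xi^{p_{m-k-\ell+1}}\cdots\xi^{p_{m-k}}\,\partial_{x^{p_{m-k-\ell+1}}}\cdots\partial_{x^{p_{m-k}}}$ which, via $\xi^{j}\partial_{x^{j}}[g(x+t\xi)]=\tfrac{d}{dt}g(x+t\xi)$, becomes $\tfrac{d^{\ell}}{dt^{\ell}}$ of a Schwartz function and integrates to zero; the surviving $\ell=0$ term is exactly $\partial_{x^{q_1}}\cdots\partial_{x^{q_{m-k}}}J^{0}f^{i_1\cdots i_k}$. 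Since the $p$-indices are untouched by $\sigma(q_1\dots q_{m-k}i_1\dots i_k)$, this symmetrization commutes with the whole computation and yields the stated right-hand side.

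This is a genuinely different route from the paper's. The paper first passes through the equivalence $W\leftrightarrow R$ and the relation $\Jc^{m-k}(J^{0}f^{i_1\cdots i_k})=2^{m-k}(m-k)!\int R f^{i_1\cdots i_k}(x+t\xi)\,dt$ from Lemma~\ref{th:generalized saint vanent operator}; it then contracts $\Jc^{m-k}(J^{0}f^{i_1\cdots i_k})$ with $\xi^{p_1}\cdots\xi^{p_{m-k}}$ and peels off one John operator at a time using the translation invariance $\langle\xi,\partial_x\rangle J^{0}f^{i_1\cdots i_k}=0$ and the homogeneity relation $\langle\xi,\partial_\xi\rangle\Jc^{\ell}(J^{0}f^{i_1\cdots i_k})=(m-k-1-\ell)\Jc^{\ell}(J^{0}f^{i_1\cdots i_k})$, producing an extra $-\partial_{x^{q}}$ at each step. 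Your argument bypasses the John operator entirely and works directly with the definition of $W^k$; it is shorter and requires no auxiliary identities beyond the chain rule and Schwartz decay. The paper's approach, on the other hand, makes the connection with the John conditions explicit, which is conceptually useful elsewhere in the analysis of the range of the ray transform.
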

\begin{proof}
  Assume that $ W^kf=0$ then from \eqref{Eq60} we have 
 \begin{align*}
  \sigma(q_1, \dots, q_{m-k}, i_1,\dots, i_k)	(Wf^{i_1\dots i_k})=0 \quad \mbox{for fixed} \quad 1\le i_1\cdots i_k\le n. 
 \end{align*} 
  This together with \eqref{eq: Saint-Venant} and the second equation of \eqref{relation_bet_rf_wf} entails
 \begin{align}\label{johns_condition_j0f_k}
 \sigma(q_1, \dots, q_{m-k}, i_1,\dots, i_k)\sigma(p_1,\cdots,p_{m-k})	\left(\Jc^{m-k} (J^0f^{i_1\cdots i_k})\right)_{p_1q_1\dots p_{m-k}q_{m-k}}=0.
 \end{align}

Since the symmetrization operators $ \sigma(q_1, \dots, q_{m-k}, i_1,\dots, i_k)$ and $ \sigma(p_1,\cdots,p_{m-k}) $ commute with each other, this together with \eqref{johns_condition_j0f_k} implies
 \begin{align*}
 \sigma(p_1,\cdots,p_{m-k})   \sigma(q_1, \dots, q_{m-k}, i_1,\dots, i_k)	\left(\Jc^{m-k} (J^0f^{i_1\cdots i_k})\right)_{p_1q_1\dots p_{m-k}q_{m-k}}=0.
 \end{align*}
  Now multiplying above by a symmetric $m-k$ tensor  $  \xi^{p_{m-k}}\cdots \xi^{p_1}$ we obtain
 \begin{align*}
   \xi^{p_{m-k}}\cdots \xi^{p_1}  \sigma(p_1,\cdots,p_{m-k})   \sigma(q_1, \dots, q_{m-k}, i_1,\dots, i_k)	\left(\Jc^{m-k} (J^0f^{i_1\cdots i_k})\right)_{p_1q_1\dots p_{m-k}q_{m-k}}&=0\\
   \sigma(p_1,\cdots,p_{m-k}) \left( \xi^{p_{m-k}}\cdots \xi^{p_1}     \sigma(q_1, \dots, q_{m-k}, i_1,\dots, i_k)	\left(\Jc^{m-k} (J^0f^{i_1\cdots i_k})\right)_{p_1q_1\dots p_{m-k}q_{m-k}}\right)&=0.
 \end{align*}
Taking summation over $ p_1,\cdots p_{m-k}$ we get 
 \begin{align}\label{main_relation}
 	\sigma(q_1, \dots, q_{m-k}, i_1,\dots, i_k) \, \xi^{p_{m-k}}\cdots \xi^{p_1} \,	\left(\Jc^{m-k} (J^0f^{i_1\cdots i_k})\right)_{p_1q_1\dots p_{m-k}q_{m-k}}=0.
 \end{align}
Since $ (J^0f^{i_1\cdots i_k})  $ is the ray transform of a symmetric $ m-k $ tensor field, from the definition we have \[ (J^0f^{i_1\cdots i_k})(x+t\xi,\xi) = (J^0f^{i_1\cdots i_k})(x,\xi),\quad (J^0f^{i_1\cdots i_k})(x,\lambda\xi)=\lambda^{m-k-1}(J^0f^{i_1\cdots i_k})(x,\xi)  \]
for $ \lambda>0 $ and $ t\in \mathbb{R}$. This immediately gives
\begin{equation}\label{translation_and_homogeneity_relation}
\begin{aligned}
	\l \xi,\PD_x\r \left((J^0f^{i_1\cdots i_k})\right)&= 0,\\
\l \xi,\PD_{\xi}\r \Jc^{\ell} \left((J^0f^{i_1\cdots i_k})\right)&=(m-k-1-\ell)\, \Jc^{\ell} \left((J^0f^{i_1\cdots i_k})\right) \quad \mbox{ for } \quad 0\le \ell \le (m-k-1).
\end{aligned}
\end{equation}
Now we compute L.H.S of \eqref{main_relation} without symmetrization.  
\begin{equation}\label{Eq19}
\begin{aligned}
  &\xi^{p_1}\cdots \xi^{p_{m-k}} \,	\left(\Jc^{m-k} (J^0f^{i_1\cdots i_k})\right)_{p_1q_1\dots p_{m-k}q_{m-k}}\\&=\xi^{p_1}\cdots \xi^{p_{m-k-1}}\,\bigg[	\PD_{\xi^{q_{m-k}}} \l \xi,\PD_{x}\r 	\left(\Jc^{m-k-1} (J^0f^{i_1\cdots i_k})\right)_{p_1q_1\dots p_{m-k-1}q_{m-k-1}}  \\&\qquad - \PD_{x^{q_{m-k}}}\left(\Jc^{m-k-1} (J^0f^{i_1\cdots i_k})\right)_{p_1q_1\dots p_{m-k-1}q_{m-k-1}} \\& \qquad \quad - \PD_{x^{q_{m-k}}}\l \xi,\PD_{\xi}\r  \left(\Jc^{m-k-1} (J^0f^{i_1\cdots i_k})\right)_{p_1q_1\dots p_{m-k-1}q_{m-k-1}}\bigg].
 \end{aligned}
 	\end{equation}
 Using \eqref{translation_and_homogeneity_relation} and the fact that John operator commutes with $ \l \xi,\PD_{x}\r $, we get
\begin{align*}
	 \l \xi,\PD_{x}\r 	\left(\Jc^{m-k-1} (J^0f^{i_1\cdots i_k})\right)=&0 \quad \mbox{and} \quad  \l \xi,\PD_{\xi}\r  \left(\Jc^{m-k-1} (J^0f^{i_1\cdots i_k})\right)=0.
\end{align*}
 This together with \eqref{Eq19} gives
 \begin{equation}\label{main_relation_1}
 \begin{aligned}
 &\xi^{p_1}\cdots \xi^{p_{m-k}} \,	\left(\Jc^{m-k} (J^0f^{i_1\cdots i_k})\right)_{p_1q_1\dots p_{m-k}q_{m-k}}\\
 &\qquad \qquad =(-1)\,  \PD_{x^{q_{m-k}}}\bigg[ \xi^{p_1}\cdots \xi^{p_{m-k-1}}\,\left(\Jc^{m-k-1} (J^0f^{i_1\cdots i_k})\right)_{p_1q_1\dots p_{m-k-1}q_{m-k-1}} \bigg].	
 \end{aligned}
 \end{equation}
Repeating the same analysis as in \eqref{Eq19} we get
	\begin{align*}
&\xi^{p_1}\cdots \xi^{p_{m-k-1}} \,	\left(\Jc^{m-k-1} (J^0f^{i_1\cdots i_k})\right)_{p_1q_1\dots p_{m-k-1}q_{m-k-1}}\\&\quad =\xi^{p_1}\cdots \xi^{p_{m-k-2}}\,\bigg[	\PD_{\xi^{q_{m-k-1}}} \l \xi,\PD_{x}\r 	\left(\Jc^{m-k-2} (J^0f^{i_1\cdots i_k})\right)_{p_1q_1\dots p_{m-k-2}q_{m-k-2}}\\&\qquad - \PD_{x^{q_{m-k-1}}}\left(\Jc^{m-k-2} (J^0f^{i_1\cdots i_k})\right)_{p_1q_1\dots p_{m-k-2}q_{m-k-2}}\\
& \qquad\qquad   - \PD_{x^{q_{m-k-1}}}\l \xi,\PD_{\xi}\r \left(\Jc^{m-k-2} (J^0f^{i_1\cdots i_k})\right)_{p_1q_1\dots p_{m-k-2}q_{m-k-2}} \bigg].	
	\end{align*}
This, \eqref{main_relation_1}  and together with \eqref{translation_and_homogeneity_relation} gives
\begin{align*}
&\xi^{p_1}\cdots \xi^{p_{m-k}} \,	\left(\Jc^{m-k} (J^0f^{i_1\cdots i_k})\right)_{p_1q_1\dots p_{m-k}q_{m-k}}\\
&\qquad\qquad = (-1)(-2) \PD^2_{x^{q_{m-k}}x^{q_{m-k-1}}}\bigg[ \xi^{p_1}\cdots \xi^{p_{m-k-2}}\, \left(\Jc^{m-k-2} (J^0f^{i_1\cdots i_k})\right)_{p_1q_1\dots p_{m-k-2}q_{m-k-2}} \bigg].	
\end{align*}
 Iterating this $(m-k-2)$ times more, we obtain
 \begin{align}\label{main_relation_4}
\xi^{p_1}\cdots \xi^{p_{m-k}} \,	\left(\Jc^{m-k} (J^0f^{i_1\cdots i_k})\right)_{p_1q_1\dots p_{m-k}q_{m-k}}=& (-1)^{m-k}\, (m-k)!\,\frac{\PD^{m-k}}{\PD x^{q_{m-k}}\cdots \PD x^{q_1}}(J^0f^{i_1\cdots i_k}).
 \end{align}
 Combining \eqref{main_relation} with \eqref{main_relation_4}
 \begin{align*}
 \sigma(q_1, \dots, q_{m-k}, i_1,\dots, i_k)\Bigg[\frac{\PD^{m-k}}{\PD x^{q_{m-k}}\cdots \PD x^{q_1}}(J^0f^{i_1\cdots i_k})\Bigg]=0.
 \end{align*}
  This finishes the proof.
\end{proof}
\begin{lemma}\label{additional_prop}
Suppose the relation \eqref{m-k_derivatiove_of_J^0f_r} holds, then we have 
\begin{align}\label{Final_identity}
\sigma(q_1, \dots, q_{m-r}, i_1,\dots, i_r)\Bigg[\frac{\PD^{m-r}}{\PD x^{q_{m-r}}\cdots \PD x^{q_1}}(J^0f^{i_1\cdots i_r})\Bigg]=0 \quad \mbox{ for} \quad 0\le r \le k.
\end{align}
\end{lemma}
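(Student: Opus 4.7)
My plan is to prove \eqref{Final_identity} by downward induction on $r$, taking the hypothesis \eqref{m-k_derivatiove_of_J^0f_r} as the base case $r=k$ and descending to $r=0$. It suffices to establish the inductive step: assuming \eqref{Final_identity} at level $r+1\le k$, one derives it at level $r$.

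To carry out the step, I would first apply $\PD/\PD x^{q_{m-r}}$ to both sides of the level-$(r+1)$ identity, with $q_{m-r}$ a new free index not in the symmetrization. Because $\sigma$ acts only on index labels, it commutes with $\PD_{x^{q_{m-r}}}$ and produces
\[
\sigma(q_1,\dots,q_{m-r-1}, i_1,\dots, i_{r+1})\,\frac{\PD^{m-r}}{\PD x^{q_{m-r}}\cdots \PD x^{q_1}} J^0 f^{i_1\cdots i_{r+1}}=0.
\]
Next I would contract both sides with $\xi^{i_{r+1}}$, applying the standard symmetrization-contraction identity
\[
\xi^{a_m}\,\sigma(a_1,\dots,a_m)\,V^{a_1\cdots a_m}=\frac{1}{m}\,\sigma(a_1,\dots,a_{m-1})\sum_{j=1}^{m}\langle V,\xi\rangle_j,
\]
where $\langle V,\xi\rangle_j$ denotes $V$ with $\xi$ plugged into its $j$-th slot.

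The decisive bookkeeping is then to split the resulting $m$ contractions by slot type. For each of the $m-r-1$ $q$-slots, plugging in $\xi$ converts $\PD_{x^{q_j}}$ into $\l\xi,\PD_x\r$, which annihilates $J^0f^{i_1\cdots i_{r+1}}$ by \eqref{translation_and_homogeneity_relation}; all such contributions vanish. For each of the $r+1$ $i$-slots, symmetry of $J^0f^{i_1\cdots i_{r+1}}$ in its upper indices yields $\xi^{i_\ell}J^0f^{i_1\cdots i_{r+1}}=J^0 f^{i_1\cdots i_r}$, and all $r+1$ such terms give the common expression $\PD^{m-r}/\PD x^{q_{m-r}}\cdots\PD x^{q_1}(J^0f^{i_1\cdots i_r})$. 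Summing produces
\[
0=\frac{r+1}{m}\,\sigma(q_1,\dots,q_{m-r-1},i_1,\dots,i_r)\,\frac{\PD^{m-r}}{\PD x^{q_{m-r}}\cdots \PD x^{q_1}}J^0f^{i_1\cdots i_r},
\]
with $q_{m-r}$ still a free index.

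Finally I would enlarge the symmetrization to include $q_{m-r}$; since $\sigma_B\sigma_A=\sigma_B$ whenever the index set $A$ is contained in $B$, vanishing under the smaller symmetrization forces vanishing under $\sigma(q_1,\dots,q_{m-r},i_1,\dots,i_r)$, which is exactly \eqref{Final_identity} at level $r$, closing the induction. The main technical hurdle is the bookkeeping step itself: recognizing that the $q$-slot contractions all die via the translation invariance \eqref{translation_and_homogeneity_relation} while the $i$-slot contractions collapse to a single common expression through the $i$-symmetry of $J^0f^{i_1\cdots i_{r+1}}$, so that after averaging one recovers the sought identity at level $r$ up to the nonzero factor $(r+1)/m$.
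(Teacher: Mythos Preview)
Your argument is correct and uses the same ingredients as the paper's proof: the contraction identity $\xi^{i_{r+1}}J^0f^{i_1\cdots i_{r+1}}=J^0f^{i_1\cdots i_r}$, the translation invariance $\langle\xi,\PD_x\rangle J^0f^{\cdots}=0$, and the symmetrization recursion \eqref{eq_3.15}. The only difference is organizational. You run a clean step-by-step downward induction, at each stage differentiating once in a new variable $q_{m-r}$, contracting one $\xi^{i_{r+1}}$, and then enlarging the symmetrization. The paper instead first peels off all $k-r$ of the $\xi$-contractions in one batch (arriving at \eqref{Eq33}, which still carries only $m-k$ derivatives), and only at the very end differentiates $k-r$ more times in $x^{q_{m-k+1}},\dots,x^{q_{m-r}}$ and applies the larger symmetrization. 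Your interleaved induction is arguably tidier and makes the constant $(r+1)/m$ transparent at each step; the paper's batch version avoids re-differentiating at intermediate stages. Either way the mechanism is identical.
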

\begin{proof}
For  $ f \in\mathcal{S}(\mathbb{R}^n) $, one can obtain the following  relation for $ 0\le r \le k $ by a direct computation:	
	\begin{align}\label{eq_3.14}
	J^0f^{i_1\dots i_r} (x, \xi)&= \int_{-\infty}^{\infty}  (f^{i_1\dots i_r})_{j_1\dots j_{m-r}}(x+t\xi)\,\xi^{j_1}\cdots \xi^{j_{k-r}} \xi^{j_{k-r+1}}\cdots\xi^{j_{m-r}}\nonumber\\ &= \xi^{i_{1}}\dots\xi^{i_{k-r}}\left(J^0f^{i_1\cdots i_k}\right)(x, \xi).
	\end{align}
	By \cite[Lemma 2.4.1]{Sharafutdinov1994}, for any $m$-tensor $f$, which has symmetry in the first $(m-k)$ indices and last $k$ indices, the following symmetrization relation holds:
	\begin{align}\label{eq_3.15}
	&	\sigma(q_1, \dots, q_{m-k}, i_1,\dots, i_{k}) f_{q_1 \dots q_{m-k} i_1\dots i_k}\nonumber \\&\qquad \qquad = \frac{1}{m} 	\sigma(q_1, \dots, q_{m-k}, i_1,\dots, i_{k-1}) \left(  k f_{q_1 \dots q_{m-k} i_1 \dots i_k}+ (m-k)   f_{i_k q_1 \dots q_{m-k-1} q_{m-k} i_1 \dots i_{k-1}}   \right).
	\end{align}
Multiplying \eqref{m-k_derivatiove_of_J^0f_r} by $ \xi^{i_{1}} $ and then summing over $ i_1 $ we get 
	\begin{align*}
 \xi^{i_1}	\sigma(q_1, \dots, q_{m-k}, i_1,\dots, i_k)\Bigg[\frac{\PD^{m-k}}{\PD x^{q_{m-k}}\cdots \PD x^{q_1}}(J^0f^{i_1\cdots i_k})  \Bigg] = 0.
	 \end{align*}
	Using \eqref{eq_3.15} from above we obtain 
\begin{align*}
 \xi^{i_1} 	  \sigma(q_1, \dots, q_{m-k}, i_2,\dots, i_{k} )\Bigg[k\,\frac{\PD^{m-k}}{\PD x^{q_{m-k}}\cdots \PD x^{q_1}}(J^0f^{i_1\cdots i_k}) +(m-k)  \frac{\PD^{m-k}}{\PD x^{i_1} x^{q_{m-k-1}}\cdots \PD x^{q_1}}(J^0f^{q_{m-k} i_2 \cdots i_k}) \Bigg] =0. 
	\end{align*}
	Since the symmetrization operator $ \sigma(q_1, \dots, q_{m-k}, i_2,\dots, i_{k} )$ is independent of $i_1$, this gives
	\begin{align*}
	 \sigma(q_1, \dots, q_{m-k}, i_2,\dots, i_{k} )\Bigg[k\,\frac{\PD^{m-k}}{\PD x^{q_{m-k}}\cdots \PD x^{q_1}}(J^0f^{i_1\cdots i_k}) \xi^{i_1} +(m-k) \langle\xi,\PD_x\rangle  \frac{\PD^{m-k-1}}{ x^{q_{m-k-1}}\cdots \PD x^{q_1}}(J^0f^{ q_{m-k} i_2 \cdots i_k}) \Bigg] =0.     
	\end{align*}
	Using the first relation in \eqref{translation_and_homogeneity_relation} and the fact that $ \langle\xi,\PD_x\rangle$ commutes with constant coefficient differential operator, we obtain
	\begin{align}\label{eq_3.16}
	    \sigma(q_1, \dots, q_{m-k}, i_2,\dots, i_{k} )\Bigg[\frac{\PD^{m-k}}{\PD x^{q_{m-k}}\cdots \PD x^{q_1}}(J^0f^{i_1\cdots i_k}) \xi^{i_1}\Bigg]=0.
    	\end{align}
	 Multiplying  \eqref{eq_3.16}  by $ \xi^{i_{2}}\dots\xi^{i_{k-r}} $ and summing over the indices $ i_2,\dots, i_{k-r} $ and repeating similar analysis as above  we get
	 \begin{align}\label{Eq33}
	\sigma(q_1, \dots, q_{m-k}, i_{k-r+1},\dots, i_k)\Bigg[\frac{\PD^{m-k}}{\PD x^{q_{m-k}}\cdots \PD x^{q_1}}(J^0f^{i_1\cdots i_k}) \xi^{i_{1}}\cdots\xi^{i_{k-r}} \Bigg] = 0.
	 \end{align}
	 After a re-indexing, combining \eqref{Eq33} with \eqref{eq_3.14} we get
\begin{align*}
\sigma(q_1, \dots, q_{m-k}, i_1,\dots, i_r)\Bigg[\frac{\PD^{m-k}}{\PD x^{q_{m-k}}\cdots \PD x^{q_1}}(J^0f^{i_1\cdots i_r}) \Bigg]=0.
\end{align*}	
Finally differentiating  this  equation with respect to $ x^{q_{m-k+1}} \dots x^{q_{m-r}} $ and then taking $ \sigma(q_1, \dots, q_{m-r}, i_1,\dots, i_r)$, we get
\begin{equation*}
 \sigma(q_1, \dots, q_{m-r}, i_1,\dots, i_r)\Bigg[\frac{\PD^{m-r}}{\PD x^{q_{m-r}}\cdots \PD x^{q_1}}(J^0f^{i_1\cdots i_r})\Bigg]=0 \qquad \mbox{ for} \quad 0\le r \le k.
\end{equation*}
\end{proof}
\begin{proof}[Proof of Theorem \ref{equi_of_ik and w^k}]
The aim is to prove $W^k f = 0$ if and only if $\Ic^k f =0$. We only need to prove the ``only if" part of the statement since the other direction \[\Ic^k f  = 0 \implies W^k f = 0,\] follows from Lemma \ref{th:generalized saint vanent operator}.
\par  In order to prove the ``only if" part, we assume $W^k f= 0$. The idea here is to use Lemma \ref{additional_prop} repeatedly. 
\\
For $ f\in \mathcal{S}(S^m)$, a direct application of integration by parts implies
\begin{align}\label{translation}
\langle \xi, \PD_x  \rangle J^kf =-k\,J^{k-1}f.
\end{align}
\noindent As a first step, we put $r=0$ in \eqref{Final_identity} to get
\[  \sigma(q_1, \dots, q_{m})\Bigg[\frac{\PD^{m}}{\PD x^{q_{m}}\cdots \PD x^{q_1}}(J^0f)\Bigg]=\frac{\PD^{m}}{\PD x^{q_{m}}\cdots \PD x^{q_1}}(J^0f) =0. \]
We know that $J^0f|_{\tn}\in \mathcal{S}(\tn) $ and  $ \langle \xi,\PD_x \rangle J^0f=0$ follows  from substituting $k=0$ in \eqref{translation}. Thus $ J^0f$ satisfies all the hypotheses of Lemma \ref{L2.5}. This implies \[ \frac{\PD^{m-1}}{\PD x^{q_{m-1}}\cdots \PD x^{q_1}}(J^0f)\big|_{\tn} \in \mathcal{S}(\tn) .\] This together with $ \PD_{x^{q_m}}\left( \frac{\PD^{m-1}}{\PD x^{q_{m-1}}\cdots \PD x^{q_1}}(J^0f)\right)=0$, entails $\frac{\PD^{m-1}}{\PD x^{q_{m-1}}\cdots \PD x^{q_1}}(J^0f)=0 $. This can be proved directly (see also  \cite[Statement 2.12]{Krishnan2019a}).  Proceeding in this way after finitely many steps we conclude \begin{align}\label{j_0_f}
    J^0f(x,\xi)=0.
\end{align}
\noindent Next, consider the Lemma \ref{additional_prop} with $ r=1 $ to get \[\sigma(q_1, \dots, q_{m-1}, i_1)\Bigg[\frac{\PD^{m-1}}{\PD x^{q_{m-1}}\cdots \PD x^{q_1}}(J^0f^{i_1})\Bigg]=0. \]
Using  $J^0f^{i_1}= \frac{\PD J^0f}{\PD \xi^{i_1}}-\frac{\PD J^1f}{\PD x^{i_1}}$ (see Lemma \ref{inversion}) together with the fact $ J^0f=0$ in the above equation which gives
\begin{align*}
\sigma(q_1, \dots, q_{m-1}, i_1)\Bigg[\frac{\PD^{m-1}}{\PD x^{q_{m-1}}\cdots \PD x^{q_1}}(J^0f^{i_1})\Bigg]=\frac{\PD^{m}}{\PD x^{q_{m-1}}\cdots \PD x^{q_1}\PD x^{i_1}}(J^1f)=0.	
\end{align*}
 We now apply the similar argument on $J^1f$ as it satisfies following:\[ J^1f|_{\tn}\in \mathcal{S}(\tn) \quad \mbox{and}\quad \langle \xi,\PD_x\rangle J^1f = -J^0f (=0), \quad \mbox{by} \quad \eqref{translation}\quad \mbox{and} \quad \eqref{j_0_f}. \]   Repeating similar argument used above we can conclude $ J^1f=0 $. 
Following the same idea, assume \[ J^pf =0\quad \mbox{for} \quad p=0,1,\cdots, r-1\]  and apply Lemma \ref{inversion} again to get 
\[ J^0f^{i_1\cdots i_r}= \frac{(-1)^r(m-r)!}{m!}  \frac{\partial^r J^rf}{\partial x^{i_1}\dots\partial x^{i_r}}.\]
This together with \eqref{Final_identity} gives
\begin{align*}
 &\sigma(q_1, \dots, q_{m-r}, i_1,\dots, i_r)\Bigg[\frac{\PD^{m-r}}{\PD x^{q_{m-r}}\cdots \PD x^{q_1}}(J^0f^{i_1\cdots i_r})\Bigg]\\
 &\qquad\qquad\qquad= \frac{\PD^{m}}{\PD x^{q_{m-r}}\cdots \PD x^{q_1} \PD x^{i_1}\cdots \PD x^{i_r}}(J^rf)=0, \quad  \mbox{for} \quad 0 \leq r \leq k.
\end{align*}
\noindent Repeating similar analysis this implies $ J^rf=0 $. Therefore $ I^rf=0  $ for $ 0\le r\le k $ or equivalently, $ \Ic^kf=0 $. 
Thus we have proved that \[ W^kf=0 \implies\Ic^kf=0.\]
This completes the proof of our main theorem.
\end{proof}
\textbf{Acknowledgement.} The authors would like to thank Venky P.  Krishnan for several fruitful discussions. 
\bibliographystyle{alpha}

\end{document}